\documentclass[a4paper,11pt,leqno]{amsart}
\usepackage{amsmath}
\usepackage{amssymb}
\usepackage{amsthm}
\usepackage{indentfirst}
\usepackage[english]{babel}
\usepackage[T1]{fontenc}  
\usepackage{breqn}
\usepackage{cite}    
\usepackage{mathrsfs}

\addtolength{\oddsidemargin}{-.875in}
\addtolength{\evensidemargin}{-.875in}
\addtolength{\textwidth}{1.75in}

\addtolength{\topmargin}{-.875in}
\addtolength{\textheight}{1.75in}

\theoremstyle{plain} 
\newtheorem{thm}{Theorem}[section] 
 
\newtheorem{lem}[thm]{Lemma} 
\newtheorem{prop}[thm]{Proposition} 

\theoremstyle{definition}

\numberwithin{equation}{section}

\title[Short intervals containing a prescribed number of primes]
      {Short intervals containing a prescribed number of primes}

\author[D. Mastrostefano]{Daniele Mastrostefano}

\email{danymastro93@hotmail.it}

\begin{document}

\begin{abstract}
We will prove that for every $m\geq 0$ there exists an $\varepsilon=\varepsilon(m)>0$ such that if $0<\lambda<\varepsilon$ and $x$ is sufficiently large in terms of $m$, then 
$$|\lbrace n\leq x: |[n,n+\lambda\log n]\cap \mathbb{P}|=m\rbrace|\gg_{m} \frac{x}{\log x}.$$
\end{abstract}

\maketitle

\section{Introduction}
\label{sec:1}
Let $\mathbb{P}$ denote the set of prime numbers and fix $\lambda>0$ a real number and $m$ a nonnegative integer. Recently, Maynard has shown, in his breakthrough paper \cite{M2} on bounded gaps between primes, that there exist infinitely many intervals $[n,n+\lambda\log n]$, with $n\in\mathbb{N}$, containing at least $m$ primes. More recently, Maynard proved (see the proof of \cite[Theorem 3.3]{M1}) that in fact there exists a positive proportion of such kind of intervals, when $\lambda$ is a small parameter depending on $m$. In particular, he showed that for every $m\geq 0$ there exists an $\varepsilon=\varepsilon(m)>0$ such that if $0<\lambda<\varepsilon$ and $x$ is sufficiently large in terms of $m$, then 
$$|\lbrace n\leq x: |[n,n+\lambda\log n]\cap \mathbb{P}|\geq m\rbrace|\gg_{m} x.$$
However, these statements do not preclude the possibility that there are choices of $\lambda$ and $m$ for which the intervals $[n,n+\lambda\log n]$ contain exactly $m$ primes, for at most finitely many $n$. Anyway, this was proven to be not the case by Freiberg, who showed the following more precise result.
\begin{thm}
 \label{thm:1.1}
For any positive real number $\lambda$ and any nonnegative integer $m$, we have
\begin{equation}
 \label{eq:1.1}
|\lbrace n\leq x: |[n,n+\lambda\log n]\cap \mathbb{P}|= m\rbrace|\geq x^{1-\varepsilon(x)},
\end{equation}
if $x$ is sufficiently large in terms of $\lambda$ and $m$, and $\varepsilon(x)$ is a certain function that tends to zero as $x$ tends to infinity.
\end{thm}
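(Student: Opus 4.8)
The plan is to combine three ingredients along a single, carefully chosen arithmetic progression: the Maynard--Tao sieve, which supplies a \emph{cluster} of at least $m$ primes inside a short block $[n,n+H]$ with $H=O_m(1)$; the classical construction of large prime gaps, which renders the \emph{adjacent} stretch $[n+H+1,\,n+H+G]$ (with $G\asymp_{\lambda}\log x$) prime-free; and a discrete intermediate-value step, which extracts from each such $n$ an interval $[t,t+\lambda\log t]$ containing \emph{exactly} $m$ primes. Only the second ingredient is needed when $m=0$, so from now on assume $m\ge 1$; throughout, $x$ is large in terms of $\lambda$ and $m$.

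\emph{Construction of the progression.} Fix an admissible $k$-tuple $\mathcal H\subseteq[0,H]$ with $k=k(m)$ large enough that the Maynard--Tao method detects at least $m$ primes among $\{n+h:h\in\mathcal H\}$; one may take $H=O_m(1)$. By the classical Westzynthius--Erd\H{o}s--Rankin argument one may choose $z=z(x)\to\infty$ with $z=o(\log x)$ (for instance $z\asymp\log x/\log\log\log x$) such that the interval $[1,\lceil(\lambda+1)\log x\rceil]$ can be covered by residue classes to distinct prime moduli $\le z$; set $G:=\lceil(\lambda+1)\log x\rceil$ and $W:=\prod_{p\le z}p=\exp((1+o(1))z)=x^{o(1)}$. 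Carrying out such a covering of $[H+1,H+G]$ while keeping, for every prime $p\le z$, the chosen residue of the centre outside $-\mathcal H\pmod p$ — possible by admissibility, the additional $O_m(1)$ forbidden residues per prime being negligible — produces a residue $v\bmod W$ with $(v+h,W)=1$ for all $h\in\mathcal H$ and with each $n+j$ $(H<j\le H+G)$ divisible by some prime $\le z$. Consequently, for every $n\equiv v\pmod W$ with $n>z$ one has $[n+H+1,\,n+H+G]\cap\mathbb P=\varnothing$.

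\emph{Counting and sliding.} Since $W=x^{o(1)}$, the quantity $WR^{2}$ with sieve level $R=x^{1/5}$ stays below the Bombieri--Vinogradov range $x^{1/2-\epsilon}$, so the Maynard--Tao sieve runs without change in the progression $n\equiv v\pmod W$, $n\in(x/2,\,x-H]$, and yields at least $x^{1-o(1)}$ values of $n$ for which at least $m$ of the $n+h$ $(h\in\mathcal H)$ are prime. Fix such an $n$ and write $f(t):=\#([t,t+\lambda\log t]\cap\mathbb P)$. The $\ge m$ primes $n+h$ lie in $[n,n+H]\subseteq[n,n+\lambda\log n]$, so $f(n)\ge m$; the integer window defining $f(n+H+1)$ is contained in $[n+H+1,\,n+H+G]$, so $f(n+H+1)=0$. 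Passing from $t$ to $t+1$ removes from the window only the single integer $t$, whence $f(t+1)\ge f(t)-1$ for all $t$; so as $f$ descends from $f(n)\ge m$ to $f(n+H+1)=0$ it must take the value $m$ at some $t\in[n,n+H]$. Distinct admissible $n$ differ by at least $W>H$, so the resulting $t$ are pairwise distinct, and hence
\[
\#\{t\le x:\ \#([t,t+\lambda\log t]\cap\mathbb P)=m\}\ \ge\ x^{1-o(1)},
\]
which is \eqref{eq:1.1} with, say, $\varepsilon(x)=O_{\lambda,m}\!\left(1/\log\log\log x\right)$.

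\emph{The main difficulty.} The heart of the argument is the triple constraint on $W$: it must be small ($W=x^{o(1)}$) so that the $x/W$ members of the progression still number $x^{1-o(1)}$; it must be large enough to accommodate a covering of an interval of length $\asymp_{\lambda}\log x$ (forcing $z$ almost as large as $\log x$); and it must be compatible both with the admissibility/$W$-trick and with the error budget $WR^{2}<x^{1/2-\epsilon}$ of the sieve. Establishing that these demands can be met simultaneously — that is, that the Maynard--Tao sieve may be executed along an arithmetic progression whose modulus is built from an Erd\H{o}s--Rankin covering — is where the real work lies; by comparison the intermediate-value step and the slow drift of the window length $\lambda\log t$ with $t$ are harmless, the latter precisely because $f$ decreases by at most $1$ at each step.
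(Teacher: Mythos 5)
The paper does not itself prove Theorem \ref{thm:1.1}: it is quoted from Freiberg \cite{F}, and the introduction summarizes Freiberg's method as an Erd\H{o}s--Rankin construction of a special admissible set of linear forms, fed into Maynard's sieve, followed by a sliding step. Your proposal correctly reconstructs that strategy, merely absorbing the Erd\H{o}s--Rankin modulus into the $W$-trick (with $\mathcal H\subseteq[0,H]$, $H=O_m(1)$) rather than into linear forms $gn+h_i$; the step you assert without proof --- that the Rankin covering of $[H+1,H+G]$ by residue classes modulo primes $p\le z=o(\log x)$ can be performed while simultaneously avoiding $-\mathcal H\bmod p$ for every such $p$, and that Maynard's sieve still runs in the resulting progression --- is exactly the content of Freiberg's Lemmas 3.2 and 3.3, which you rightly flag as the crux.
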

This is \cite[Theorem 1.1]{F} in which we may take $\varepsilon(x)=(\log\log\log\log x)^{2}/\log\log\log x$. The aim of this paper is to show that, using a careful investigation of the Maynard paper \cite{M1} and the Freiberg paper \cite{F}, we may improve Theorem \ref{thm:1.1}, finding a better lower bound for the size of the set in \eqref{eq:1.1} at the cost to consider only small values of $\lambda$. In fact, we prove the following Theorem.
\begin{thm} 
 \label{thm:1.2}
For any nonnegative integer $m$, there exists an $\varepsilon=\varepsilon(m)>0$ such that for every $0<\lambda<\varepsilon$ we have
\begin{equation}
 \label{eq:1.2}
|\lbrace n\leq x: |[n,n+\lambda\log n]\cap \mathbb{P}|=m\rbrace|\gg_{m} \frac{x}{\log x},
\end{equation}
if $x$ is sufficiently large in terms of $m$.
\end{thm}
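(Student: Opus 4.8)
The plan is to reduce the statement to a discrete intermediate value principle, supplied with two ingredients: Maynard's positive proportion estimate for a lower bound, and an elementary upper bound sieve. Write $N_\lambda(n)=\#\bigl(\mathbb{P}\cap[n,n+\lambda\log n]\bigr)$; the goal is to produce $\gg_m x/\log x$ integers $n\le x$ with $N_\lambda(n)=m$. The case $m=0$ is immediate, since $\#\{n\le x:N_\lambda(n)\ge 1\}\le\sum_{p\le 2x}(\lambda\log p+1)=(2+o(1))\lambda x+O(x/\log x)<x/2$ once $\lambda<1/5$, leaving at least $x/2$ integers $n\le x$ with $N_\lambda(n)=0$; so assume $m\ge 1$. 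The engine is the remark that, for $n$ large, $N_\lambda(n+1)-N_\lambda(n)\in\{-1,0,1\}$: passing from $n$ to $n+1$ removes at most the single integer $n$ from the left of the interval, while the right endpoint advances by less than $2$ and hence the interval gains at most one new prime. Consequently, whenever $N_\lambda(a)\ge m+1$ and $N_\lambda(b)\le m$ with $a\le b$, there is an integer $n_*\in[a,b]$ with $N_\lambda(n_*)=m$.

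The strategy is then to find $\gg_m x$ integers $n_0$ with $N_\lambda(n_0)\ge m+1$ for which there is also an integer $n_1$ with $n_0\le n_1\le n_0+\lambda\log n_0$ and $N_\lambda(n_1)\le m$; for each such $n_0$ the principle above yields a witness $n_*(n_0)\in[n_0,n_0+\lambda\log x]$ with $N_\lambda(n_*(n_0))=m$, and since $n_0\in[n_*-\lambda\log x,n_*]$ the map $n_0\mapsto n_*(n_0)$ is at most $(\lambda\log x+1)$-to-one, so this produces $\gg_m x/\log x$ values of $n_*$. First I would invoke Maynard's positive proportion result (the estimate recalled in Section~\ref{sec:1}, from the proof of \cite[Theorem~3.3]{M1}) with $m$ replaced by $m+1$ and $\lambda$ by $\lambda/2$, permissible as long as $\lambda<2\varepsilon(m+1)$; applied on a dyadic range this gives a set $\mathcal{A}\subseteq(x/2,x]$ with $|\mathcal{A}|\gg_m x$ of integers $n_0$ having at least $m+1$ primes in $[n_0,n_0+\tfrac{\lambda}{2}\log n_0]$, in particular $N_\lambda(n_0)\ge m+1$. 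Next I would note that an integer $n$ with $\#(\mathbb{P}\cap[n,n+L])\ge 2$ is counted, with multiplicity, by the triples $(n,p,p')$ with $p<p'$ primes in $[n,n+L]$; Selberg's sieve applied to the pairs $\{0,h\}$ together with the bounded average $\sum_{h\le L}\mathfrak{S}(h)\ll L$ of the singular series then gives $\#\{n\le x:\#(\mathbb{P}\cap[n,n+L])\ge 2\}\ll L^{2}x/(\log x)^{2}$ for $2\le L\le\log x$, so with $L=\lceil 3\lambda\log x\rceil$ the set $\mathcal{B}=\{n\le x:\#(\mathbb{P}\cap[n,n+3\lambda\log n])\ge 2\}$ has $|\mathcal{B}|\ll\lambda^{2}x$. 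Taking $\varepsilon(m)$ small enough — less than $2\varepsilon(m+1)$, less than $1/3$, and small enough that $\lambda^{2}$ times the absolute constant in the sieve bound is below half the constant in the Maynard estimate — the set $\mathcal{S}=\mathcal{A}\setminus\mathcal{B}$ still satisfies $|\mathcal{S}|\gg_m x$.

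The heart of the argument is then the following. Fix $n_0\in\mathcal{S}$, so $N_\lambda(n_0)\ge m+1$. If there were no integer in $[n_0,n_0+\lambda\log n_0]$ where $N_\lambda\le m$, then $N_\lambda\ge m+1$ throughout; putting $n_1=\lfloor n_0+\lambda\log n_0\rfloor$ and observing that $[n_0,n_0+\lambda\log n_0]$ and $[n_1,n_1+\lambda\log n_1]$ overlap in a set of length $<1$ while their union lies in $[n_0,n_0+3\lambda\log n_0]$, this last interval would contain at least $2(m+1)-1=2m+1$ primes, contradicting $n_0\notin\mathcal{B}$. Hence such an $n_1$ exists, and the discrete intermediate value principle yields an integer $n_*=n_*(n_0)\in[n_0,n_1]\subseteq[n_0,n_0+\lambda\log x]$ with $N_\lambda(n_*)=m$; discarding the $O(\lambda\log x)$ integers $n_0$ within $\lambda\log x$ of $x$ guarantees $n_*\le x$. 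Counting as explained above then gives $\#\{n\le x:N_\lambda(n)=m\}\gg_m x/\log x$, which is Theorem~\ref{thm:1.2}.

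I expect the genuine difficulty to lie in this last step, or rather in the tension it resolves. Maynard's method produces clusters of at least $m+1$ primes for a positive proportion of $n_0$ but offers no upper bound, so an unlucky $n_0$ could sit inside a long, prime-rich block, in which case $N_\lambda$ need not return to $m$ until far away, which would destroy the bound $x/\log x$. Ruling this out is precisely what the sieve estimate for $\mathcal{B}$ does, and it is here that the hypothesis that $\lambda$ be small is indispensable: only for small $\lambda$ is a window of length $\asymp\lambda\log x$ unlikely — with probability $\ll\lambda^{2}$ — to contain two primes, and only then does removing the thin set $\mathcal{B}$ force $N_\lambda$ back down to $m$ within one window-length of $n_0$, converting "at least $m+1$ somewhere" into "exactly $m$ close by" at the cost of only a factor $\log x$. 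The remaining work — checking that every threshold depends on $m$ alone and uniformly over $0<\lambda<\varepsilon(m)$, and that the sieve and Chebyshev-type estimates hold with absolute constants — is routine.
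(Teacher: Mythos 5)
Your overall strategy --- combine Maynard's positive-proportion lower bound with a Selberg-type upper bound and a discrete intermediate value argument for $N_\lambda$ --- is genuinely different from the paper's route, which reruns Maynard's sieve with an auxiliary ``small-prime-factor'' filter so that the $\ge m$ primes it produces in $[gn,gn+5\lambda\log x]$ are all confined to the left sub-interval $[gn,gn+\lambda\log x]$, and then slides a window as in Freiberg. But there is a concrete error in your construction of the set $\mathcal{B}$ that, as written, kills the argument. You define
\[
\mathcal{B}=\bigl\{\,n\le x:\ \#\bigl(\mathbb{P}\cap[n,n+3\lambda\log n]\bigr)\ge 2\,\bigr\}
\]
and set $\mathcal{S}=\mathcal{A}\setminus\mathcal{B}$. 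However, for $m\ge 1$ every $n_0\in\mathcal{A}$ already has at least $m+1\ge 2$ primes in the shorter interval $[n_0,n_0+\tfrac{\lambda}{2}\log n_0]\subset[n_0,n_0+3\lambda\log n_0]$, so $\mathcal{A}\subseteq\mathcal{B}$ and $\mathcal{S}=\emptyset$: the hypothesis ``$n_0\notin\mathcal{B}$'' that you contradict never holds for any relevant $n_0$. The threshold ``$\ge 2$'' is simply inconsistent with the count $2(m+1)-1=2m+1$ that your own overlap argument produces.

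The fix is to take
\[
\mathcal{B}=\bigl\{\,n\le x:\ \#\bigl(\mathbb{P}\cap[n,n+3\lambda\log n]\bigr)\ge 2m+1\,\bigr\},
\]
run the Selberg upper bound sieve over $(2m+1)$-tuples $\{0,d_2,\dots,d_{2m+1}\}$ with $d_i\le L=\lceil 3\lambda\log x\rceil$, average the singular series via Gallagher's estimate $\sum_{d_2<\cdots<d_{2m+1}\le L}\mathfrak{S}(\{0,d_2,\dots,d_{2m+1}\})\ll_m L^{2m}$, and conclude $|\mathcal{B}|\ll_m \lambda^{2m+1}x$. Then taking $\lambda$ small enough in terms of $m$ gives $|\mathcal{S}|\gg_m x$, after which your contradiction and the intermediate-value step go through unchanged (the $\pm 1$ increment of $N_\lambda$ is fine: consecutive integers $>3$ are never both prime, so the $<2$-length gain on the right contributes at most one new prime). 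With that repair the argument is correct, uniform over $0<\lambda<\varepsilon(m)$, and does give Theorem~\ref{thm:1.2}. Compared to the paper's proof, which reworks Maynard's sieve from the inside to force the primes to sit at one end of a longer window and thereby obtains the ``descent to zero'' for free, your corrected argument is more modular and elementary: it treats the positive-proportion theorem and a multidimensional Selberg sieve as black boxes and gets the ``descent'' by excluding a thin exceptional set. The paper's approach, on the other hand, keeps the full Maynard machinery explicit and thereby tracks the constants (and the dependence of $\varepsilon$ on $m$) quantitatively.
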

From an heuristic point of view, we expect a positive proportion of such short intervals containing a prescribed number of primes, i.e.
$$|\lbrace n\leq x: |[n,n+\lambda\log n]\cap \mathbb{P}|= m\rbrace|\gg_{m,\lambda} x,$$
if $x$ is sufficiently large in terms of $\lambda$ and $m$. More precisely, we conjecture that 
$$ |\lbrace n\leq x: |[n,n+\lambda\log n]\cap \mathbb{P}|= m\rbrace|\backsim \frac{\lambda^{m} e^{-\lambda}}{m!}x,$$
for every $\lambda$ and $m$. We refer the reader to the expository article \cite{S} of Soundararajan for further discussions on these fascinating statistics. In order to prove Theorem \ref{thm:1.2} we use a combination of ideas present in \cite{F} and \cite{M1}. The Freiberg's approach was to construct a special admissible set of linear forms, by using an Erd\H{o}s--Rankin type construction \cite[Lemma 3.3]{F}, to which apply the Maynard result \cite[Theorem 3.1]{M1}, finding many disjoint intervals containing at least $m$ primes. The conclusion now follows by using a sliding process to detect several different intervals containing exactly $m$ primes. In this paper, we use the Maynard sieve method introduced in \cite{M1}, in place of the Erd\H{o}s--Rankin construction, finding a large proportion of particular intervals containing at least $m$ primes to which apply the combinatorial process of Freiberg. Unfortunately, avoiding the use of the Erd\H{o}s--Rankin construction limits the choice of $\lambda$, which can be taken only very small.
\section{Notations and preliminaries}
Throughout, $\mathbb{P}$ denotes the set of all primes, $\mathbf{1}_{\mathbb{P}}: \mathbb{N}\rightarrow \lbrace 0,1\rbrace$ the indicator function of $\mathbb{P}\subset \mathbb{N}$ and $p$ a prime. As usual, $\varphi$ will denote the Euler totient function, $\mu$ the Moebius function and $\omega(n)$ is the counting function of the number of different prime factors of a positive integer $n$. We let $(m,n)$ be the greatest common divisor of integers $n$ and $m$. We will always denote with $x$ a sufficiently large real number. By $o(1)$ we mean a quantity that tends to $0$ as $x$ tends to infinity. The expressions $A=O(B), A\ll B, B\gg A$ denote that $|A|\leq c|B|$, where $c$ is some positive (absolute, unless stated otherwise) constant.\\
In the following we will always consider admissible $k$-tuples of linear forms $\lbrace gn+h_{1},...,gn+h_{k}\rbrace$, where $0< h_{1}<h_{2}<...<h_{k}<\lambda\log x$, $k\geq 0$ a fixed sufficiently large integer and $g$ a fixed positive integer, coprime with $B$, squarefree and such that $\log x<g\leq 2\log x$. Here, $B=1$ or $B$ is a prime with $\log\log x^{\eta}\ll B\ll x^{2\eta},$ where we put $\eta=c/500k^{2}$ with $0<c<1$. For example, we may take $g$ as a prime number in the interval $[\log x, 2\log x],$ with $g\neq B$. As usual, a finite set $\lbrace L_{1},...,L_{k}\rbrace$ of linear functions is admissible if the set of solutions modulo $p$ to $L_ {1}(n)\cdots L_{k}(n)\equiv 0\pmod{p}$ does not form a complete residue system modulo $p$, for any prime $p$. In our case, in which $L_{i}(n)=gn+h_{i}$, for every $i=1,...,k$, we may infer that the set $\lbrace L_{1},...,L_{k}\rbrace$ is admissible if and only if the set $\lbrace h_{1},...,h_{k}\rbrace$ it is, in the sense that the elements $h_{1},...,h_{k}$ do not cover all the residue classes modulo $p$, for any prime $p$.
We quote \cite[Proposition 6.1]{M1}, adapted to our situation:
\begin{prop}
\label{prop:2.1} Let $\mathcal{H}=\lbrace h_{1},...,h_{k}\rbrace$ be an admissible set with $0<h_{1}<...<h_{k}<\lambda\log x$, with $k, B$ positive integers and $x,\lambda$ positive real numbers. Suppose $1\leq B\leq x^{2\eta}$, with $\eta=c/500k^{2}$ and $c>0$. Let $R=x^{\frac{1}{24•}}$, $k(\log\log x)^{2}/(\log x)\leq \rho\leq 1/80$ and $g$ a positive integer in the interval $[\log x, 2\log x]$, squarefree and coprime with $B$. There is a constant $C>0$ such that the following holds. If $k\geq C$, then there is a choice of nonnegative weights $w_{n}=w_{n}(\mathcal{H})$ satisfying 
\begin{equation}
\label{eq:2.1}
w_{n}\ll (\log R)^{2k}\prod_{i=1}^{k}\prod_{p|gn+h_{i}, p\nmid B}4
\end{equation}
for which we have, for every integer $h\in[1, 5\lambda\log x]$, that
\begin{equation}
\label{eq:2.2}
\sum_{x<n\leq 2x}w_{n}=\left(1+O\left(\frac{1}{(\log x)^{1/10}}\right)\right)\frac{B^{k}}{\varphi(B)^{k}•}\mathfrak{S}_{B}(\mathcal{H})x(\log R)^{k}I_{k},
\end{equation}
\begin{equation}
\label{eq:2.3}
\sum_{x<n\leq 2x}\mathbf{1}_{\mathbb{P}}(gn+h)w_{n}\geq\left(1+O\left(\frac{1}{(\log x)^{1/10}}\right)\right)\frac{B^{k-1}}{\varphi(B)^{k-1}•}\mathfrak{S}_{B}(\mathcal{H})\frac{\varphi(g)}{g•}(\log R)^{k+1}J_{k}\sum_{x<n\leq 2x}\mathbf{1}_{\mathbb{P}}(gn+h)
\end{equation}
$$+O\left(\frac{B^{k}}{\varphi(B)^{k}•}\mathfrak{S}_{B}(\mathcal{H})x(\log R)^{k-1}I_{k}\right),$$
\begin{equation}
\label{eq:2.4}
\sum_{x<n\leq 2x}\mathbf{1}_{S(\rho, B)}(gn+h)w_{n}\ll \rho^{-1}\frac{\Delta_{\mathcal{L}}}{\varphi(\Delta_{\mathcal{L}})}\frac{B^{k+1}}{\varphi(B)^{k+1}•}\mathfrak{S}_{B}(\mathcal{H})x(\log R)^{k-1}I_{k},
\end{equation}
where $\Delta_{\mathcal{L}}=|g|^{k+1}\prod_{i=1}^{k}|h-h_{i}|\neq 0$, with $\mathcal{L}=\lbrace gn+h_{1},...,gn+h_{k}\rbrace$, and where we define 
$$S(\rho, B)=\lbrace n\in \mathbb{N}: p|n\Rightarrow (p>x^{\rho}\ \textrm{or}\ p|B)\rbrace.$$
Finally, we have
\begin{equation}
\label{eq:2.5}
\sum_{x<n\leq 2x}\bigg(\sum_{\substack{p|gn+h\\p<x^{\rho}\\p\nmid B}}1 \bigg)w_{n}\ll \rho^{2}k^{4}(\log k)^{2}\frac{B^{k}}{\varphi(B)^{k}•}\mathfrak{S}_{B}(\mathcal{H})x(\log R)^{k}I_{k}.
\end{equation}
Here $I_{k}, J_{k}$ are quantities depending only on $k$, and $\mathfrak{S}_{B}(\mathcal{L})$ is a quantity depending only on $\mathcal{L}$, and these satisfy
\begin{equation}
\label{eq:2.6}
\mathfrak{S}_{B}(\mathcal{L})\gg \frac{1}{\exp(O(k))},\ \ I_{k}\gg \frac{1}{(2k\log k)^{k}},\ \ J_{k}\gg \frac{\log k}{k•}I_{k}.
\end{equation}
\end{prop}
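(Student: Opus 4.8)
The plan is to obtain Proposition \ref{prop:2.1} by specialising \cite[Proposition 6.1]{M1}, together with the accompanying estimates of \cite[\S\S6--7]{M1}, to our concrete set-up: the sampling range $\lbrace n:\ x<n\leq 2x\rbrace$, the set of primes $\mathbb{P}$ with its Bombieri--Vinogradov level of distribution $\theta=1/2$, the auxiliary modulus $B$, and the linear forms $L_{i}(n)=gn+h_{i}$ with $g$ squarefree, $g\in[\log x,2\log x]$ and $(g,B)=1$. We take the Maynard weights
$$w_{n}=\Bigg(\sum_{\substack{d_{i}\mid gn+h_{i}\ (1\leq i\leq k)\\ (d_{1}\cdots d_{k},\,B)=1}}\lambda_{d_{1},\dots,d_{k}}\Bigg)^{2},\qquad \lambda_{d_{1},\dots,d_{k}}\ \text{built from}\ y_{r_{1},\dots,r_{k}}=F\Big(\tfrac{\log r_{1}}{\log R},\dots,\tfrac{\log r_{k}}{\log R}\Big),$$
exactly as in \cite{M1}, where $F$ is a fixed smooth function supported on the simplex $\lbrace t_{i}\geq 0,\ \sum_{i}t_{i}\leq 1\rbrace$ and the $r_{i}$ run over squarefree integers coprime to $B$ with $\prod_{i}r_{i}<R=x^{1/24}$; the sole departures from \cite{M1} are the replacement of $n+h_{i}$ by $gn+h_{i}$ and the insertion of the condition $(d_{i},B)=1$, and these are exactly what turn the classical singular series into the $B$-modified $\mathfrak{S}_{B}(\mathcal{H})$ and produce the benign factors $B^{k}/\varphi(B)^{k}$ and $\varphi(g)/g$ in \eqref{eq:2.2}--\eqref{eq:2.5}. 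Granting this, \eqref{eq:2.1} is immediate: one has the standard bound $|\lambda_{d_{1},\dots,d_{k}}|\ll(\log R)^{k}$, while the number of squarefree $k$-tuples $(d_{1},\dots,d_{k})$ with $d_{i}\mid gn+h_{i}$ and $(d_{i},B)=1$ is $\prod_{i}\prod_{p\mid gn+h_{i},\,p\nmid B}2$, so squaring the inner sum yields the factor $4$ per prime. The value $R=x^{1/24}$ is amply within the range permitted by $\theta=1/2$, leaving room for the extra sieving needed in \eqref{eq:2.4}--\eqref{eq:2.5} and for the uniformity in $g$ and $h$.

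For \eqref{eq:2.2} and \eqref{eq:2.3} one follows the main-term computation of \cite[\S6]{M1} essentially verbatim. Expanding $w_{n}$ and interchanging summations, $\sum_{x<n\leq2x}w_{n}$ becomes $\sum_{\vec d,\vec e}\lambda_{\vec d}\lambda_{\vec e}\,\#\lbrace x<n\leq 2x:\ [d_{i},e_{i}]\mid gn+h_{i}\ \forall i\rbrace$; since $(g,B)=1$ and the $d_{i},e_{i}$ are supported on primes dividing some $gn+h_{i}$, the inner count is $\tfrac{x}{\prod_{i}[d_{i},e_{i}]}+O(1)$ once the negligible set of tuples sharing a factor with $g$ is discarded, and the resulting multilinear sum is evaluated as in \cite{M1} to give $\mathfrak{S}_{B}(\mathcal{H})x(\log R)^{k}I_{k}$ up to the stated relative error $O((\log x)^{-1/10})$. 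For \eqref{eq:2.3} the count additionally requires $gn+h\in\mathbb{P}$; here $g\cdot\prod_{i}[d_{i},e_{i}]\ll x^{1/12}\log x=x^{1/12+o(1)}$, so the Bombieri--Vinogradov theorem in the form valid modulo $B$ --- this is where the hypothesis $B\ll x^{2\eta}$ is used, $B$ being small enough not to affect the level of distribution --- gives equidistribution of $gn+h$ over the primes on average over $\vec d,\vec e$; extracting the density $\varphi(g)/g$ of the residue class of $gn+h$ modulo $g$ and summing produces the main term, and the secondary term in \eqref{eq:2.3} is the sieve-type error incurred when the prime condition is imposed alongside the weights, of relative size $(\log R)^{-1}$, exactly as in the analogous estimate of \cite[\S6]{M1}. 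Throughout, every error term must be uniform in $g\in[\log x,2\log x]$, in $h\in[1,5\lambda\log x]$, in $1\leq B\ll x^{2\eta}$ and in the admissible set $\mathcal{H}$; securing this uniformity --- and not any new idea --- is the main technical burden, and it is precisely what \cite[Proposition 6.1]{M1} is built to deliver.

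Finally, \eqref{eq:2.4} and \eqref{eq:2.5} are upper-bound sieve estimates for the weights restricted, respectively, to $n$ with $gn+h\in S(\rho,B)$ and to a weighted count of small prime factors of $gn+h$, proved as in \cite{M1} (compare the estimates there controlling almost-prime contributions). For \eqref{eq:2.4} one majorises $\mathbf{1}_{S(\rho,B)}(gn+h)$ by the fundamental-lemma upper-bound sieve removing the primes $p\leq x^{\rho}$ with $p\nmid B$, interchanges with the sum over $n$, and applies \eqref{eq:2.2}; the primes dividing $g\prod_{i}(h-h_{i})$ are handled separately and are responsible for the factor $\Delta_{\mathcal{L}}/\varphi(\Delta_{\mathcal{L}})$, while the saving $\rho^{-1}(\log R)^{-1}$ over the total mass is $\asymp\prod_{p\leq x^{\rho},\,p\nmid B}(1-1/p)$. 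For \eqref{eq:2.5} one writes the left-hand side as $\sum_{p<x^{\rho},\,p\nmid B}\sum_{x<n\leq 2x,\,p\mid gn+h}w_{n}$ and bounds each inner sum: the smoothness of $F$ near the boundary of the simplex forces a saving of order $(\log p/\log R)^{2}$ over the naive proportion $p^{-1}$ (the mechanism by which the Maynard weights concentrate on values $gn+h$ with few small prime factors), whence $\sum_{p<x^{\rho}}p^{-1}(\log p)^{2}(\log R)^{-2}\asymp\rho^{2}$, the remaining factor $k^{4}(\log k)^{2}$ accounting for the number of form-indices involved and the relevant ratios of sieve integrals, all following \cite{M1}. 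The lower bounds \eqref{eq:2.6} are the known estimates for $\mathfrak{S}_{B}$, $I_{k}$ and $J_{k}$ coming from an explicit admissible choice of $F$ (such as Maynard's symmetric construction), which is insensitive to the presence of $B$. The one genuine obstacle throughout is bookkeeping: checking that passing from $n+h_{i}$ to $gn+h_{i}$ and imposing coprimality to $B$ perturbs every density evaluation only through the factors $B^{k}/\varphi(B)^{k}$, $\varphi(g)/g$ and $\mathfrak{S}_{B}$, and that all error terms remain uniform in $g$, $h$, $B$, $\rho$ and $\mathcal{H}$.
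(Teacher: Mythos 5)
Your proposal is correct and follows the same route as the paper, which in fact offers no proof of this statement at all: it simply quotes \cite[Proposition 6.1]{M1} ``adapted to our situation,'' with the required equidistribution input (Hypothesis 1 of \cite{M1}) isolated separately as Theorem \ref{thm:2.2}. Your sketch of how the passage from $n+h_i$ to $gn+h_i$ and the coprimality to $B$ produce the factors $\varphi(g)/g$, $B^k/\varphi(B)^k$ and $\mathfrak{S}_B$ supplies more detail than the paper does, and is consistent with it.
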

What is made implicitly in Proposition \ref{prop:2.1} is that, under the hypothesis of the statement, a level of distribution result for linear forms in arithmetic progressions holds. In fact, Proposition \ref{prop:2.1} depends heavily on what is called Hypothesis 1 in \cite{M1}. We will see that our admissible set of linear functions satisfies a such kind of result in the following Theorem.
\begin{thm}
\label{thm:2.2}
Consider $k$ sufficiently large, and put $\eta=c/500k^{2}$ with $c>0$. Let $\mathcal{L}=\lbrace gn+h_{1},...,gn+h_{k}\rbrace$ be an admissible set of linear forms with $0< h_{1}<h_{2}<...<h_{k}<\lambda\log x$ and $g$ a positive integer, coprime with $B$, squarefree and such that $\log x<g\leq 2\log x$, with $B$ a positive integer. Then $B$ and $c$ may be chosen so that the following holds once $x$ is large enough in terms of $k$. For each $L\in\mathcal{L}$, 
\begin{equation}
\label{eq:2.7}
\frac{\varphi(B)}{B•}\frac{\varphi(g)}{g•}\sum_{x<n\leq 2x}\mathbf{1}_{\mathbb{P}}(L(n))>\frac{x}{2\log x•},
\end{equation}
and
\begin{equation}
\label{eq:2.8}
\sum_{\substack{q\leq x^{1/8}\\(q,B)=1}}\max_{(L(a),q)=1)}\bigg|\sum_{\substack{x<n\leq 2x\\ n\equiv a\pmod{q}}}\mathbf{1}_{\mathbb{P}}(L(n))-\frac{\varphi(g)}{\varphi(gq)}\sum_{x<n\leq 2x}\mathbf{1}_{\mathbb{P}}(L(n))\bigg|\ll \frac{\sum_{x<n\leq 2x}\mathbf{1}_{\mathbb{P}}(L(n))}{(\log x)^{100k^{2}}}.
\end{equation}
Moreover, $c<1$ and either $B=1$ or $B$ is a prime satisfying $\log\log x^{\eta}\ll B\ll x^{2\eta}$.
\end{thm}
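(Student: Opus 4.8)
The plan is to establish \eqref{eq:2.7} and \eqref{eq:2.8} as consequences of a Bombieri--Vinogradov type theorem for the arithmetic progressions $gn+h_i\pmod{q}$, after absorbing the congruence conditions coming from $g$ and $B$. First I would rewrite $\sum_{x<n\le 2x}\mathbf 1_{\mathbb P}(gn+h_i)$ as a sum of $\Lambda(m)/\log$-type weights over primes $m\equiv h_i\pmod g$ in the dyadic window $(gx,2gx]$, so that the problem becomes one about primes in a single fixed residue class modulo $g$, and then modulo $gq$ once the extra congruence $n\equiv a\pmod q$ is imposed (note $(g,q)=1$ since $g$ is prime-sized and we may assume $q<g$ or handle the common-factor case trivially as it forces the class to be prime-free). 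With $g\le 2\log x$ and $q\le x^{1/8}$ the modulus $gq$ stays well below $x^{1/4}$, so the classical Bombieri--Vinogradov theorem applied with level $x^{1/2-\delta}$ handles the average over $q$, giving the main term $\frac{\varphi(g)}{\varphi(gq)}\sum_{x<n\le 2x}\mathbf 1_{\mathbb P}(gn+h_i)$ with an error saving of an arbitrary power of $\log x$; this yields \eqref{eq:2.8}. For \eqref{eq:2.7}, the Siegel--Walfisz theorem (or just the prime number theorem in the progression $h_i\pmod g$, valid since $g\le 2\log x$ is far below any Siegel-zero threshold) gives $\sum_{x<n\le 2x}\mathbf 1_{\mathbb P}(gn+h_i)\sim \frac{1}{\varphi(g)}\cdot\frac{x}{\log x}$, whence $\frac{\varphi(g)}{g}\sum\ge \frac{x}{2\log x}$ for large $x$, and the factor $\frac{\varphi(B)}{B}$ is harmlessly absorbed when $B=1$; the case $B$ prime is what forces the genuine work.

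The role of $B$ is the real content of the theorem: the point of choosing $B$ to be a well-chosen prime (or $1$) is to sidestep the hypothetical Siegel zero. So the key step is a careful dichotomy. I would invoke the standard fact that there is at most one real primitive character $\chi$ to a modulus $\le (\log x)^{A}$ (say) with an exceptional zero, and if such a modulus $q_0$ exists I set $B$ equal to its largest prime factor (the "conductor killer"), otherwise $B=1$; this is exactly the device in \cite{M1} and it ensures that, after removing the contribution of characters with modulus dividing $B$, the remaining characters modulo $gq$ satisfy a zero-free region of classical Siegel--Walfisz strength with no exception. The factor $\frac{\varphi(B)}{B}$ in \eqref{eq:2.7} and the coprimality condition $(g,B)=1$, $(q,B)=1$ throughout Proposition~\ref{prop:2.1} are precisely what make this removal legitimate. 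One then re-runs the Bombieri--Vinogradov argument restricted to moduli coprime to $B$, where no Siegel zero can occur, and the bounds \eqref{eq:2.7}--\eqref{eq:2.8} follow with the stated uniformity in $k$ (the exponent $100k^2$ on the log is cheap: Bombieri--Vinogradov gives an arbitrary power, and $\eta=c/500k^2$ with $c<1$ is chosen small enough that $x^{2\eta}=x^{c/250k^2}$ is a power of $x$ small enough to keep $B$ within the zero-free analysis while large enough to capture the exceptional modulus when it exists).

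Concretely the steps, in order: (1) reduce both \eqref{eq:2.7} and \eqref{eq:2.8} to statements about $\pi(y;gq,a)$ for $a$ determined by $h_i$ and the congruence class; (2) state the exceptional-character dichotomy and define $B$ accordingly, checking $\log\log x^\eta\ll B\ll x^{2\eta}$ holds in the exceptional case (the lower bound because any modulus supporting a Siegel zero must be reasonably large, the upper bound because we only need to look among moduli up to a small power of $x$); (3) invoke Siegel--Walfisz on moduli coprime to $B$ for the individual asymptotic \eqref{eq:2.7}, being careful that $g\le 2\log x$ keeps us inside the Siegel--Walfisz range trivially; (4) invoke the Bombieri--Vinogradov theorem (in the form allowing an extra fixed small modulus $g$, e.g. by summing over $q$ and using $gq$ as the effective modulus, or citing a version with a fixed coprime modulus built in) to get \eqref{eq:2.8}, again on moduli coprime to $B$ so that the Siegel-zero term is absent. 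The main obstacle is step (2)--(4): making the elimination of the Siegel zero clean and uniform in $k$, i.e. verifying that the single bad modulus is genuinely killed by passing to moduli coprime to $B$ and that all implied constants in \eqref{eq:2.8} are absolute (independent of $k$) despite the modulus bound $q\le x^{1/8}$ and the fixed nuisance factor $g$. Everything else is a routine repackaging of Bombieri--Vinogradov and Siegel--Walfisz.
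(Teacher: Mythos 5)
Your proposal is correct and follows essentially the same approach as the paper: the paper's ``proof'' of Theorem~\ref{thm:2.2} is a direct citation to Freiberg's Lemma 3.2 (itself close to Maynard's Hypothesis-1 verification in \cite{M1}), and that lemma's proof is exactly the Bombieri--Vinogradov argument combined with the Landau--Page exceptional-character dichotomy you describe, with $B$ chosen to be a prime factor of the (possible) exceptional modulus. Two slips worth flagging, neither of which changes the viability of the route. First, you cannot reduce to $(g,q)=1$: the claim ``we may assume $q<g$'' is false (here $q$ runs up to $x^{1/8}$ while $g\le 2\log x$), and a common factor $p\mid (g,q)$ does not force the class $ga+h_i\pmod{gq}$ to be prime-free; the correct fix is simply to treat $gq$ as a single modulus in the Bombieri--Vinogradov sum, which is exactly what the factor $\varphi(g)/\varphi(gq)$ in \eqref{eq:2.8} is designed to accommodate. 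Second, your intermediate asymptotic for \eqref{eq:2.7} is off by a factor of $g$: since the values $gn+h_i$ for $x<n\le 2x$ occupy an interval of length $\asymp gx$, the prime number theorem in arithmetic progressions gives $\sum_{x<n\le 2x}\mathbf 1_{\mathbb P}(gn+h_i)\sim \frac{g}{\varphi(g)}\cdot\frac{x}{\log x}$, not $\frac{1}{\varphi(g)}\cdot\frac{x}{\log x}$; with the correct asymptotic, $\frac{\varphi(g)}{g}\sum\sim\frac{x}{\log x}$ and \eqref{eq:2.7} follows, whereas your stated version would produce only $\frac{x}{g\log x}\ll\frac{x}{(\log x)^{2}}$, which is too small.
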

\begin{proof}
This is precisely \cite[Lemma 3.2]{F}, adapted to our situation. In particular, the main difference is in the choice of $g$. Indeed, here we chose $g\in [\log x, 2\log x]$ whereas in \cite{F} was chosen $g=\prod_{p\leq \log x^{\eta}}p$. Anyway, the structure of the proof of the Theorem \ref{thm:2.2} is exactly the same of \cite[Lemma 3.2]{F} and the computations change only in a few points, in which it is very easy to verify that they do not affect the stated results \eqref{eq:2.7} and \eqref{eq:2.8}. For the aforementioned reasons, we omit the details of the proof.
\end{proof}
\section{The Maynard sieve method}
Under the hypothesis and notations of Proposition \ref{prop:2.1}, we define the following sum:
\begin{equation}
\label{eq:3.1}
S=\sum_{\substack{\mathcal{H}=\lbrace h_{1},...,h_{k}\rbrace\ \textrm{admissible}\\ 0< h_{1}<h_{2}<...<h_{k}<\lambda\log x}}\sum_{x<n\leq 2x}S(\mathcal{H},n),
\end{equation}
where 
$$S(\mathcal{H},n)=\bigg( \sum_{i=1}^{k} \textbf{1}_{\mathbb{P}}(gn+h_{i}) -(m-1)-k\sum_{i=1}^{k}\sum_{\substack{p|gn+h_{i}\\ p\leq x^{\rho}, p\nmid B}}1 -k\sum_{\substack{ h\leq 5\lambda\log x\\ (h,g)=1\\h\not\in\mathcal{H}}} \textbf{1}_{S(\rho, B)}(gn+h)\bigg)w_{n}(\mathcal{H}),$$
with $m$ a nonnegative integer. We note that
\begin{equation}
\label{eq:3.2}
S\ll k(\log x)^{2k} \exp\left(O\left(\frac{k}{\rho•}\right)\right)\sum_{\substack{\mathcal{H}=\lbrace h_{1},...,h_{k}\rbrace\ \textrm{admissible}\\ 0< h_{1}<h_{2}<...<h_{k}<\lambda\log x}} \sum_{x<n\leq 2x} \mathbf{1}_{S(\mathcal{H}, n)>0},
\end{equation}
uniformly in $n$ and $\mathcal{H}$, by \eqref{eq:2.1} and the fact that
\begin{equation} 
\label{eq:3.3}
k\sum_{i=1}^{k}\sum_{\substack{p|gn+h_{i}\\ p\leq x^{\rho}, p\nmid B}}1=0,
\end{equation} 
if $S(\mathcal{H}, n)>0$. In order to have $S(\mathcal{H}, n)>0$ we must have
\begin{equation}
\label{eq:3.4}
\sum_{i=1}^{k} \textbf{1}_{\mathbb{P}}(gn+h_{i}) -(m-1)>0,
\end{equation}
\begin{equation}
\label{eq:3.5}
k\sum_{i=1}^{k}\sum_{\substack{p|gn+h_{i}\\ p\leq x^{\rho}, p\nmid B}}1=0,
\end{equation}
\begin{equation}
\label{eq:3.6}
k\sum_{\substack{ h\leq 5\lambda\log x\\(h,g)=1\\ h\not\in\mathcal{H}}} \textbf{1}_{S(\rho, B)}(gn+h)=0.
\end{equation}
The equation \eqref{eq:3.4} tells us that there are at least $m$ primes among $gn+h_{1},...,gn+h_{k}$; the equation \eqref{eq:3.5} tells us that all the $gn+h_{1},...,gn+h_{k}$ are divided only by primes $p>x^{\rho}$; finally, the equation \eqref{eq:3.6} tells us that all the remaining $gn+h$ are divided by a primes $p\leq x^{\rho}$. In fact, this follows directly by \eqref{eq:3.6} when $(h,g)=1$ and trivially when $(h,g)>1$, because in this case we can certainly find a small prime dividing $gn+h$. In particular, all the $gn+h$ with $h\not\in \mathcal{H}$ cannot be prime numbers and \eqref{eq:3.5} together with \eqref{eq:3.6} imply that the elements $gn+h_{1},...,gn+h_{k}$ are uniquely determined as the integers in $[gn,gn+5\lambda\log x]$ with no prime factors $p\nmid B$ less than $x^{\rho}$. This means that in the intervals $[gn,gn+5\lambda\log x]$ the prime numbers are contained in the set $\lbrace gn+h_{1},...,gn+h_{k}\rbrace$, for an admissible set $\mathcal{H}=\lbrace h_{1},...,h_{k}\rbrace$, and that no $n$ can make a positive contribution from two different admissible sets. So we have proved that:
\begin{equation}
\label{eq:3.7}
S\ll k(\log x)^{2k} \exp\left(O\left(\frac{k}{\rho•}\right)\right)|I(x)|,
\end{equation}
where the set $I(x)$ contains interval of the form  $[gn,gn+5\lambda\log x]$, for $x<n\leq 2x$, with  the property that $|[gn,gn+5\lambda\log x]\cap \mathbb{P}|=|\lbrace gn+h_{1},...,gn+h_{k}\rbrace\cap\mathbb{P}|\geq m$, for a unique admissible set $\mathcal{H}=\lbrace h_{1},...,h_{k}\rbrace$ such that $0<h_{1}<...<h_{k}<\lambda\log x$. We note that the intervals in $I(x)$ are pairwise disjoint. In fact, suppose to consider two intervals $[gn,gn+5\lambda\log x], [gm,gm+5\lambda\log x]$ for two distinct integers $n,m$. Let us assume $n<m$ (the case $n>m$ is similar), then $gn+5\lambda\log x<g(n+1)\leq gm$, if $\lambda<1/5$.\\
Now, we want find a lower bound for $S$. Using \eqref{eq:2.2}--\eqref{eq:2.5}, we find
\begin{equation}
\label{eq:3.8}
S\geq\sum_{\substack{\mathcal{H}=\lbrace h_{1},...,h_{k}\rbrace\ \textrm{admissible}\\ 0< h_{1}<h_{2}<...<h_{k}<\lambda\log x}}\bigg[ (1+o(1))\frac{B^{k-1}}{\varphi(B)^{k-1}•}\mathfrak{S}_{B}(\mathcal{H})(\log R)^{k+1} J_{k}\sum_{i=1}^{k}\frac{\varphi(g)}{g•}\sum_{x<n\leq 2x}\textbf{1}_{\mathbb{P}}(gn+h_{i})
\end{equation}
$$-(m-1)(1+o(1))\frac{B^{k}}{\varphi(B)^{k}•}\mathfrak{S}_{B}(\mathcal{H})x(\log R)^{k} I_{k}+O\left(\rho^{2}k^{6}(\log k)^{2}\frac{B^{k}}{\varphi(B)^{k}•}\mathfrak{S}_{B}(\mathcal{H})x(\log R)^{k} I_{k}\right)$$
$$+O\left(k\frac{B^{k}}{\varphi(B)^{k}•}\mathfrak{S}_{B}(\mathcal{H})x(\log R)^{k-1} I_{k}\right)+O\bigg(\frac{k}{\rho}\frac{B^{k+1}}{\varphi(B)^{k+1}•}\mathfrak{S}_{B}(\mathcal{H})x(\log R)^{k-1} I_{k}\sum_{\substack{h\leq 5\lambda\log x\\(h,g)=1\\ h\not\in \mathcal{H}}}\frac{\Delta_{\mathcal{L}}}{\varphi(\Delta_{\mathcal{L}•})}\bigg)\bigg].$$
By the inequality \eqref{eq:2.7}, we have
$$\frac{\varphi(B)}{B•}\frac{\varphi(g)}{g•}\sum_{i=1}^{k}\sum_{x<n\leq 2x}\textbf{1}_{\mathbb{P}}(gn+h_{i})>\frac{kx}{2\log x•},$$
for every admissible set $\mathcal{H}=\lbrace h_{1},...,h_{k}\rbrace$ such that $0< h_{1}<h_{2}<...<h_{k}< \lambda\log x$. Note that the hypothesis of Theorem \ref{thm:2.2} are satisfied. Moreover, we need the following Lemma.
\begin{lem} 
\label{lem:3.1}
Let us consider an admissible set $\mathcal{L}=\lbrace gn+h_{1},...,gn+h_{k}\rbrace$ with $0< h_{1}<h_{2}<...<h_{k}<\lambda\log x$ and $g$ a positive number in the interval $[\log x, 2\log x]$. If we define $\Delta_{\mathcal{L}}=|g|^{k+1}\prod_{i=1}^{k}|h-h_{i}|$, we have
\begin{equation}
\label{eq:3.9}
\sum_{\substack{h\leq 5\lambda\log x\\ (h,g)=1\\ gn+h\not\in \mathcal{L}}}\frac{\Delta_{\mathcal{L}}}{\varphi(\Delta_{\mathcal{L}•})}\ll \lambda(\log x) (\log k).
\end{equation}
\end{lem}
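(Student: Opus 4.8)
The plan is to reduce the sum to a mean value of an $n/\varphi(n)$-type multiplicative function and then to evaluate that mean value by the classical device of writing $n/\varphi(n)=\sum_{d\mid n}\mu^{2}(d)/\varphi(d)$ and interchanging summations; the only real work lies in controlling the resulting secondary term.

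First I would separate off the contribution of $g$. Put $P(h):=\prod_{i=1}^{k}|h-h_{i}|$; note that $gn+h\notin\mathcal L$ is precisely the condition $h\notin\mathcal H$, which guarantees $P(h)\neq0$. Since $t/\varphi(t)$ depends only on $\operatorname{rad}(t)$ and $\operatorname{rad}(|g|^{k+1})=\operatorname{rad}(g)$, splitting the prime factors of $\Delta_{\mathcal L}$ according to whether they divide $g$ gives the exact identity
$$\frac{\Delta_{\mathcal L}}{\varphi(\Delta_{\mathcal L})}=\frac{g}{\varphi(g)}\prod_{\substack{p\mid P(h)\\ p\nmid g}}\frac{p}{p-1}.$$
Writing $H:=5\lambda\log x$, the factor $g/\varphi(g)$ will be harmless: only a proportion $\asymp\varphi(g)/g$ of the $h\le H$ are coprime to $g$, and the two effects cancel up to an admissible error. (If $g$ is taken to be a prime in $[\log x,2\log x]$, as the hypotheses allow, then $g/\varphi(g)=1+o(1)$ and there is nothing to say here.)

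Next I would expand $\prod_{p\mid P(h),\,p\nmid g}\frac{p}{p-1}=\sum_{d}\mu^{2}(d)\,\mathbf 1_{(d,g)=1}\,\mathbf 1_{d\mid P(h)}/\varphi(d)$ and interchange the order of summation:
$$\sum_{\substack{h\le H,\ (h,g)=1\\ h\notin\mathcal H}}\frac{\Delta_{\mathcal L}}{\varphi(\Delta_{\mathcal L})}\ \le\ \frac{g}{\varphi(g)}\sum_{\substack{\mu^{2}(d)=1\\ (d,g)=1}}\frac{1}{\varphi(d)}\,\#\bigl\{h\le H:\ (h,g)=1,\ d\mid P(h)\bigr\}.$$
For squarefree $d$, the condition $d\mid P(h)$ means $h\equiv h_{i}\pmod p$ for some $i$, for every $p\mid d$, which cuts out $g(d):=\prod_{p\mid d}\nu(p)$ residue classes modulo $d$, where $\nu(p):=\#\{h_{i}\bmod p\}\le\min(p,k)$. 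Since $(d,g)=1$, combining this with the coprimality condition modulo $g$ and running inclusion--exclusion over the divisors of $g$ gives, uniformly in $d$,
$$\#\bigl\{h\le H:\ (h,g)=1,\ d\mid P(h)\bigr\}=\frac{\varphi(g)}{g}\cdot\frac{g(d)}{d}\,H+O\bigl(2^{\omega(g)}g(d)\bigr),$$
and this count vanishes unless every prime factor of $d$ is $<H$ (because $p\mid P(h)$ forces $p\le|h-h_{i}|<H$). The main term contributes
$$H\sum_{\substack{\mu^{2}(d)=1\\ (d,g)=1}}\frac{g(d)}{d\,\varphi(d)}\ \le\ H\prod_{p}\Bigl(1+\frac{\nu(p)}{p(p-1)}\Bigr)\ \ll\ H\prod_{p\le k}\frac{p}{p-1}\ \ll\ H\log k,$$
where $\nu(p)\le\min(p,k)$ and $\sum_{p>k}\frac1{p(p-1)}\ll\frac1{k\log k}$ are used to estimate the Euler product.

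The point requiring care — and what I expect to be the main obstacle — is the secondary term $\frac{g}{\varphi(g)}2^{\omega(g)}\sum_{\mu^{2}(d)=1,\ \text{all }p\mid d\text{ are }<H}g(d)/\varphi(d)$: the divisor sum is \emph{not} truncated at $d\ll H$, so a priori $d$ can be as large as $\prod_{p<H}p=e^{(1+o(1))H}$ and one cannot afford the crude step $\lfloor H/d\rfloor=H/d+O(1)$. The saving is that $\nu(p)\le\min(p,k)$ keeps this sum small:
$$\sum_{\substack{\mu^{2}(d)=1\\ \text{all }p\mid d\text{ are }<H}}\frac{g(d)}{\varphi(d)}=\prod_{p<H}\Bigl(1+\frac{\nu(p)}{p-1}\Bigr)\le 3^{\pi(k)}\exp\Bigl(k\sum_{k<p<H}\frac1{p-1}\Bigr)\ll_{k}(\log H)^{k}=(\log\log x)^{O_{k}(1)};$$
and since $\frac{g}{\varphi(g)}2^{\omega(g)}\ll(\log\log\log x)(\log x)^{o(1)}=(\log x)^{o(1)}$, the whole secondary term is $(\log x)^{o(1)}=o(H)$ once $x$ is large in terms of $k$. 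Adding the two contributions yields
$$\sum_{\substack{h\le H,\ (h,g)=1\\ h\notin\mathcal H}}\frac{\Delta_{\mathcal L}}{\varphi(\Delta_{\mathcal L})}\ \ll\ H\log k=5\lambda(\log x)\log k\ \ll\ \lambda(\log x)(\log k),$$
as claimed.
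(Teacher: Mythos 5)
Your overall strategy is sound, and it is genuinely different from the paper's. The paper truncates the divisor sum at $d\le\sqrt{\lambda\log x}$, uses Selberg's sieve (stated as Lemma~4.1 in the paper) for the arithmetic-progression count at small $d$, and handles the tail $d>\sqrt{\lambda\log x}$ with a Rankin-type weight $\sum_{p\mid d}\log p/\log\sqrt{\lambda\log x}\ge 1$, bounding it crudely by $g/\varphi(g)\cdot\Delta_{\mathcal L}/\varphi(\Delta_{\mathcal L})\ll(\log\log\log x)^2$ per $h$. You avoid both devices: you count $h$ in progressions coprime to $g$ by direct inclusion--exclusion over $e\mid g$ (giving the error $O(2^{\omega(g)})$ per residue class, which is the same order as the Selberg remainder $4^{\omega(g)}$ in the paper), and you replace the truncation by the observation that the relevant $d$ must have all prime factors $<H$, so that the error sum over $d$ collapses to the finite product $\prod_{p<H}(1+\nu(p)/(p-1))=(\log\log x)^{O_k(1)}$. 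This is a more elementary argument achieving the same bound; the paper's route buys generality in that the tail bound does not depend on the arithmetic structure of $P(h)$ at all, while yours buys simplicity and transparency.

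One point of the write-up needs to be repaired, though, and it is exactly the point you yourself identified as the crux. When you interchange the sums and pass to the inequality
$$\sum_{\substack{h\le H,\ (h,g)=1\\ h\notin\mathcal H}}\frac{\Delta_{\mathcal L}}{\varphi(\Delta_{\mathcal L})}\ \le\ \frac{g}{\varphi(g)}\sum_{\substack{\mu^{2}(d)=1\\ (d,g)=1}}\frac{1}{\varphi(d)}\,\#\bigl\{h\le H:\ (h,g)=1,\ d\mid P(h)\bigr\},$$
you have dropped the constraint $h\notin\mathcal H$ from the inner count; but the subsequent (and essential) assertion that the count vanishes unless all prime factors of $d$ are $<H$ \emph{uses} $h\notin\mathcal H$, since $P(h_i)=0$ is divisible by every $d$. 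As written, the count on the right is $\ge 1$ for every $d$ (as soon as some $h_i\le H$ is coprime to $g$), so the $d$-sum is genuinely divergent and the displayed inequality is useless. The fix is simply to keep $h\notin\mathcal H$ inside the count after the interchange (the interchange is then an equality, not an inequality); the vanishing for $d$ with a prime factor $\ge H$ then holds, and only afterwards may you relax the count to $\#\{h\le H:(h,g)=1,\ d\mid P(h)\}$ restricted to those $d$. With this re-ordering the rest of your estimates go through exactly as you wrote them, and the conclusion $\ll\lambda(\log x)(\log k)$ follows.
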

We leave the proof to the next section. Using these estimates and choosing $\rho=c_{0}k^{-3}(\log k)^{-1}$, with $c_{0}$ a suitable small constant, we find
\begin{equation}
\label{eq:3.10}
S\gg \sum_{\substack{\mathcal{H}=\lbrace h_{1},...,h_{k}\rbrace\ \textrm{admissible}\\ 0< h_{1}<h_{2}<...<h_{k}<\lambda\log x}}\frac{B^{k}}{\varphi(B)^{k}•}\mathfrak{S}_{B}(\mathcal{H})x(\log R)^{k}\bigg[kJ_{k}\frac{\log R}{2\log x}-(m-1)I_{k}+O(c_{0}^{2}I_{k})
\end{equation}
$$+O(k I_{k}(\log R)^{-1})+O\left(\frac{k^{4}(\log k)}{c_{0}} I_{k}(\log R)^{-1}\lambda(\log x)(\log k) \right)\bigg].$$
Now, by \eqref{eq:2.6} we know that $J_{k}\gg \frac{\log k}{k•}I_{k}.$ We should consider $k$ sufficiently large in terms of $m$. For example, we may take $k=C\exp(m^{48})$, with $C>0$ a suitable large constant. Choosing $\lambda\leq\varepsilon$ a small multiple of $k^{-4}(\log k)^{-2}$, we may find
\begin{equation}
\label{eq:3.11}
S\gg \sum_{\substack{\mathcal{H}=\lbrace h_{1},...,h_{k}\rbrace\ \textrm{admissible}\\ 0< h_{1}<h_{2}<...<h_{k}<\lambda\log x}}\frac{B^{k}}{\varphi(B)^{k}•}\mathfrak{S}_{B}(\mathcal{H})x(\log R)^{k}I_{k}.
\end{equation}
By the estimates \eqref{eq:2.6} we know that $I_{k}\gg (2k\log k)^{-k}$ and that $\mathfrak{S}_{B}(\mathcal{L})\gg \exp(-C_{1}k)$, for a certain $C_{1}>0$. Finally, we may certainly use $\frac{B^{k}}{\varphi(B)^{k}•}\geq 1$. Inserting these in \eqref{eq:3.11}, we obtain
\begin{equation}
\label{eq:3.12}
S\gg x(\log x)^{k}\exp(-C_{2}k^{2})\sum_{\substack{\mathcal{H}=\lbrace h_{1},...,h_{k}\rbrace\ \textrm{admissible}\\ 0< h_{1}<h_{2}<...<h_{k}<\lambda\log x}} 1,
\end{equation}
for a suitable constant $C_{2}>0$. Thus, we are left to obtain a lower bound for the inner sum of \eqref{eq:3.12}.
We greedily sieve the interval $[1,\lambda\log x]$, by removing for each prime $p\leq k$ in turn any elements from the residue class modulo $p$ which contains the fewest elements. The resulting set has size at least
$$\lambda\log x\prod_{p\leq k}\left(1-\frac{1}{p•}\right)\gg \frac{\log x}{k^{4}(\log k)^{3}•},$$
by our choice of $\lambda$ and Mertens's theorem. Now, any choice of $k$ distinct $h_{i}$ from this set will constitute an admissible set $\mathcal{H}$. Therefore, we obtain the lower bound
$$\sum_{\substack{\mathcal{H}=\lbrace h_{1},...,h_{k}\rbrace\ \textrm{admissible}\\ 0< h_{1}<h_{2}<...<h_{k}<\lambda\log x}}1\geq k^{-k}\left(C_{3}\frac{\log x}{k^{4}(\log k)^{3}•}-k\right)^{k}\gg(\log x)^{k}\exp(-C_{4} k^{2}),$$
for certain constants $C_{3}, C_{4}>0$. This leads to $S\gg x(\log x)^{2k}\exp(-C_{5} k^{2}),$ for a suitable absolute constant $C_{5}>0$. We conclude this section finding a lower bound for $|I(x)|$. By combining \eqref{eq:3.7} with the above information on $S$ we obtain
\begin{equation}
\label{eq:3.13}
|I(x)|\gg xk^{-1}\exp(-C_{5}k^{2})\exp(-C_{6}k^{4}(\log k))\gg x\exp(-C_{7}k^{5}),
\end{equation}
for certain absolute constants $C_{6},C_{7}>0$.
\section{Proof of Lemma 3.1}
\begin{proof}[Proof of Lemma 3.1] Let $k$ be a fixed number. Since $\Delta_{\mathcal{L}}=|g|^{k+1}\prod_{i=1}^{k}|h-h_{i}|$ and $\Delta_{\mathcal{L}}/\varphi(\Delta_{\mathcal{L}})=\sum_{d|\Delta_{\mathcal{L}}} \mu^{2}(d)/\varphi(d)$, we may write
\begin{equation}
\label{eq:4.1}
\sum_{\substack{h\leq 5\lambda\log x\\ (h,g)=1\\ gn+h\not\in \mathcal{L}}}\frac{\Delta_{\mathcal{L}}}{\varphi(\Delta_{\mathcal{L}•})}=\frac{g}{\varphi(g)•}\sum_{\substack{h\leq 5\lambda\log x\\ (h,g)=1\\ gn+h\not\in \mathcal{L}}}\sum_{\substack{d|\Delta_{\mathcal{L}}\\ (d,g)=1}}\frac{\mu^{2}(d)}{\varphi(d)•}
\end{equation}
$$\ll \frac{g}{\varphi(g)•}\sum_{\substack{h\leq 5\lambda\log x\\ (h,g)=1\\ gn+h\not\in \mathcal{L}}}\bigg(\sum_{\substack{1\leq d\leq \sqrt{\lambda\log x}\\ d|\Delta_{\mathcal{L}}\\ (d,g)=1}}\frac{\mu^{2}(d)}{\varphi(d)•}+\sum_{\substack{d>\sqrt{\lambda\log x}\\ d|\Delta_{\mathcal{L}}}}\frac{\mu^{2}(d)}{\varphi(d)•}\frac{\sum_{p|d}\log p}{\log(\sqrt{\lambda\log x•})•}\bigg).$$
Regarding the second term in parenthesis we use $\frac{g}{\varphi(g)•}\ll \log\log g\ll \log\log\log x$, which holds by \cite[$\S 5.4$, Theorem 4]{T}, and it becomes
\begin{equation}
\label{eq:4.2}
\ll(\log\log\log x)\sum_{\substack{h\leq 5\lambda\log x\\ gn+h\not\in \mathcal{L}}}\sum_{p|\Delta_{\mathcal{L}}}\frac{\log p}{p\log(\sqrt{\lambda\log x•})•}\frac{\Delta_{\mathcal{L}}}{\varphi(\Delta_{\mathcal{L}})•}.
\end{equation}
Since $\frac{\Delta_{\mathcal{L}}}{\varphi(\Delta_{\mathcal{L}})•}\ll \log\log \Delta_{\mathcal{L}}\ll\log\log\log x$, again by \cite[$\S 5.4$, Theorem 4]{T} and the fact that $g,h,h_{1},...,h_{k}$ $\ll\log x$, and using Mertens's theorem to evaluate   
$$\sum_{p|\Delta_{\mathcal{L}}}\frac{\log p}{p•}\leq \sum_{p\leq \omega(\Delta_{\mathcal{L}})}\frac{\log p}{p•} \ll \log(\omega(\Delta_{\mathcal{L}}))\ll \log\log\log x,$$ 
the second term in the last line of \eqref{eq:4.1} becomes
\begin{equation}
\label{eq:4.3}
\ll \frac{(\log\log\log x)^{3}}{\log\log x•}\lambda\log x\ll \lambda(\log x)(\log k).
\end{equation}
Now, we concentrate on the first term. It is equal to
\begin{equation}
\label{eq:4.4}
\frac{g}{\varphi(g)•}\sum_{\substack{d\leq \sqrt{\lambda\log x}\\ (d,g)=1}}\frac{\mu^{2}(d)}{\varphi(d)•}\sum_{\substack{h\leq 5\lambda\log x\\ h\neq h_{1},..,h_{k}\\(h,g)=1\\d|\Delta_{\mathcal{L}}}}1.
\end{equation}
The innermost sum may be written as
\begin{equation}
\label{eq:4.5}
\sum_{\substack{1\leq c\leq d\\ P(c)\equiv 0\pmod{d}}}\sum_{\substack{h\leq 5\lambda\log x\\ h\neq h_{1},..,h_{k}\\(h,g)=1\\h\equiv c\pmod{d}}}1,
\end{equation}
where we let $P(x)$ to be the polynomial $P(x)=\prod_{i=1}^{k}(x-h_{i})$. We put $\rho(d)$ the number of solutions of $P(x) \pmod{d}$. We will use the following Lemma.
\begin{lem}
\label{lem:4.1} Let us consider $h_{1},...,h_{k},g, \lambda$ as above. Suppose that $d\leq \sqrt{\lambda\log x}$ is a positive integer coprime with $g$. We have
\begin{equation}
\label{eq:4.6}
\sum_{\substack{h\leq 5\lambda\log x\\ h\neq h_{1},..,h_{k}\\(h,g)=1\\h\equiv c\pmod{d}}}1\ll \frac{\varphi(g)•}{g}\frac{\lambda\log x}{d}, 
\end{equation}
for every residue class $c\pmod{d}$.
\end{lem}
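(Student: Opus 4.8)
The plan is to prove \eqref{eq:4.6} by a direct inclusion--exclusion over the prime divisors of $g$. First I would discard the condition $h\notin\{h_1,\dots,h_k\}$, which only decreases the count, so that it suffices to bound $N:=\bigl|\{h\le y:\ h\equiv c\pmod d,\ (h,g)=1\}\bigr|$, where $y:=5\lambda\log x$. Since $g$ is squarefree, $\mathbf 1_{(h,g)=1}=\sum_{e\mid(h,g)}\mu(e)=\sum_{e\mid g,\ e\mid h}\mu(e)$, and therefore
$$N=\sum_{e\mid g}\mu(e)\,\bigl|\{h\le y:\ h\equiv c\pmod d,\ h\equiv 0\pmod e\}\bigr|.$$
Because $(d,g)=1$ and $e\mid g$, we have $(d,e)=1$, so by the Chinese Remainder Theorem the two congruences define a single residue class modulo $de$, whose number of representatives in $[1,y]$ is $y/(de)+O(1)$, uniformly in $c$. (Were $(d,g)>1$, this step would break down, which is precisely why that hypothesis is imposed.)

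Summing over $e\mid g$ then produces the main term $\frac{y}{d}\sum_{e\mid g}\frac{\mu(e)}{e}=\frac{y}{d}\cdot\frac{\varphi(g)}{g}$, which is exactly $5\cdot\frac{\varphi(g)}{g}\frac{\lambda\log x}{d}$ as desired, together with an error $O\bigl(\sum_{e\mid g}1\bigr)=O(2^{\omega(g)})$. It remains only to verify that the error is absorbed by the main term. Here I would use $2^{\omega(g)}=d(g)\ll_{\varepsilon}(\log x)^{\varepsilon}$ for any fixed $\varepsilon>0$, valid since $g\le 2\log x$; meanwhile $d\le\sqrt{\lambda\log x}$ gives $y/d\ge 5\sqrt{\lambda\log x}$, and $\varphi(g)/g\gg(\log\log\log x)^{-1}$ by Mertens's theorem (equivalently \cite[$\S 5.4$, Theorem 4]{T}). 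Hence $\frac{\varphi(g)}{g}\cdot\frac{y}{d}\gg_{\lambda}\sqrt{\log x}/\log\log\log x$, which for $\varepsilon<1/2$ dominates $(\log x)^{\varepsilon}$ once $x$ is large enough in terms of $k$ (recall $\lambda$ is fixed in terms of $k$). This yields $N\ll\frac{\varphi(g)}{g}\frac{\lambda\log x}{d}$, which is \eqref{eq:4.6}.

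The computation is otherwise routine; the only point demanding attention --- and the reason the precise hypotheses on $g$ and $d$ enter --- is keeping the accumulated error $2^{\omega(g)}$ below the main term. The bound $g\le 2\log x$ forces $2^{\omega(g)}=(\log x)^{o(1)}$, the bound $d\le\sqrt{\lambda\log x}$ keeps $y/d$ of size at least $\sqrt{\lambda\log x}$, and the main term additionally carries the non-negligible factor $\varphi(g)/g\gg1/\log\log\log x$, so it genuinely wins. No further difficulties are anticipated.
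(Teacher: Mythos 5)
Your proof is correct, but it takes a genuinely different route from the paper. The paper invokes Selberg's upper bound sieve \cite[Theorem 7.1]{FI}: applying it to the sequence of $h\le 5\lambda\log x$ in a fixed residue class modulo $d$, sifted by the primes dividing $g$, with level $D=4\log^2 x$ so that the main term is $\le\lambda\log x/(Jd)$ with $J\ge g/\varphi(g)$, and with a sieve error $\le 4^{\omega(g)}\le\exp(C\log g/\log\log g)$ that is $(\log x)^{o(1)}$ and hence negligible against the main term. You instead perform a direct Legendre--M\"obius inclusion--exclusion over the (squarefree) divisors $e\mid g$, use $(d,e)=1$ and CRT to evaluate each residue count as $y/(de)+O(1)$, and obtain the main term $\frac{y}{d}\frac{\varphi(g)}{g}$ with error $O(2^{\omega(g)})$. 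Both treatments of the error are sound and lead to the same conclusion; your error $2^{\omega(g)}$ is even slightly better than the paper's $4^{\omega(g)}$, and, as you note, both are $(\log x)^{o(1)}$ and are beaten by the main term $\gg\sqrt{\lambda\log x}/\log\log\log x$ since $d\le\sqrt{\lambda\log x}$. The reason the paper reaches for Selberg's sieve is almost certainly inherited from Freiberg's version of this lemma, where $g=\prod_{p\le(\log x)^{\eta}}p$ has $\omega(g)\asymp\eta\log x/\log\log x$ prime factors and the Legendre error $2^{\omega(g)}$ would be catastrophically large; in the present paper $g\le 2\log x$, so $\omega(g)\ll\log\log x/\log\log\log x$, and your more elementary argument is both adequate and arguably cleaner.
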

Now, we see how this implies the Lemma \ref{lem:3.1}. Inserting \eqref{eq:4.6} into \eqref{eq:4.5} and \eqref{eq:4.5} into \eqref{eq:4.4} we find
\begin{equation}
\label{eq:4.7}
\frac{g}{\varphi(g)•}\sum_{\substack{d\leq \sqrt{\lambda\log x}\\ (d,g)=1}}\frac{\mu^{2}(d)}{\varphi(d)•}\sum_{\substack{h\leq 5\lambda\log x\\ h\neq h_{1},..,h_{k}\\(h,g)=1\\d|\Delta_{\mathcal{L}}}}1\ll \lambda(\log x)\sum_{d\leq \sqrt{\lambda\log x}}\frac{\mu^{2}(d)\rho(d)}{d\varphi(d)•}\ll \lambda(\log x)\prod_{p\leq \lambda\log x}\left(1+\frac{\rho(p)}{p(p-1)}\right)
\end{equation}
$$\ll \lambda(\log x)\prod_{p\leq k}\left(1+\frac{1}{p-1}\right)\prod_{p> k}\left(1+\frac{k}{p(p-1)}\right)\ll \lambda(\log x)(\log k),$$
because the second product converges and the first one is $\ll\log k$ by Mertens's theorem.
Collecting our results, these give the upper bound \eqref{eq:3.9}.
\end{proof}
We now return to prove Lemma \ref{lem:4.1}.
\begin{proof}[Proof of Lemma 4.1]
By the Selberg's upper bound \cite[Theorem 7.1]{FI}, we have
\begin{equation}
\label{eq:4.8}
\sum_{\substack{h\leq 5\lambda\log x\\ h\equiv c\pmod{d}\\ p|h\Rightarrow p\nmid g}}1\leq \frac{\lambda\log x}{Jd}+O\bigg(\sum_{\substack{e\leq D\\ p|e\Rightarrow p|g}} 3^{\omega(e)}\mu^{2}(e)\bigg),
\end{equation}
where 
\begin{equation}
\label{eq:4.9}
J=\sum_{\substack{e\leq \sqrt{D}\\ p|e\Rightarrow p|g}}\frac{\mu^{2}(e)}{\varphi(e)}\geq \sum_{e|g}\frac{\mu^{2}(e)}{\varphi(e)}=\frac{g}{\varphi(g)},
\end{equation}
taking $D=4\log^{2}x$, so that $g\leq \sqrt{D}$.
Finally, with this choice of $D$ we find
\begin{equation}
\label{eq:4.10}
\sum_{\substack{e\leq D\\ p|e\Rightarrow p|g}} 3^{\omega(e)}\mu^{2}(e)\leq \prod_{p|g}4=4^{\omega(g)}\leq \exp\left( C\frac{\log g}{\log\log g}\right)
\end{equation}
using \cite[$\S 5.3$, Theorem 3]{T}, for a certain absolute constant $C>0$. By our definition of $g$ we may bound the error term in \eqref{eq:4.8} with
\begin{equation}
\label{eq:4.11}
\sum_{\substack{e\leq D\\ p|e\Rightarrow p|g}} 3^{\omega(e)}\mu^{2}(e)\leq \exp\left( C'\frac{\log\log x}{\log\log\log x}\right)\ll \frac{\sqrt{\lambda\log x}}{(\log\log\log x)^{2}}\ll \frac{\lambda \log x}{d•}\frac{\varphi(g)•}{g},
\end{equation}
by \cite[$\S 5.4$, Theorem 4]{T}, for a suitable $C'>0$, when $x$ is sufficiently large (in terms of $k$ also). Inserting \eqref{eq:4.11} and \eqref{eq:4.9} into \eqref{eq:4.8} we find the thesis of the Lemma.
\end{proof}
\section{The combinatorial construction}
Consider an interval $I\in I(x)$. There exist an integer $x<n\leq 2x$ and an admissible set $\mathcal{H}=\lbrace h_{1},...,h_{k}\rbrace$, with  $0< h_{1}<h_{2}<...<h_{k}< \lambda\log x$, such that $I=[gn, gn+5\lambda\log x]$ and 
$$|[gn,gn+5\lambda\log x]\cap \mathbb{P}|=|\lbrace gn+h_{1},...,gn+h_{k}\rbrace\cap\mathbb{P}|\geq m.$$
Let us define 
\begin{equation}
\label{eq:5.1}
I_{j}=[N_{j}, N_{j}+\lambda \log N_{j}],\ \ N_{j}=gn+j,
\end{equation}
for $j=0,...,\lfloor \lambda\log N_{0}\rfloor$. For any such $j$ we have $I_{j}\subseteq I$. Indeed, by \eqref{eq:5.1} it is sufficient to prove that 
$$gn\leq N_ {j}<N_{j}+\lambda \log N_{j}<gn+5\lambda\log x,$$
which holds because 
$$j+\lambda \log N_{j}<\lambda\log N_{0}+\lambda \log N_{j}<\lambda\log N_{0}+\lambda\log 2N_{0}<3\lambda\log N_{0}<5\lambda\log x,$$
if $x$ is sufficiently large in terms of $k$. In particular, we have
$$I_{j}\cap \mathbb{P}=I_{j}\cap \lbrace gn+h_{1},...,gn+h_{k}\rbrace\cap\mathbb{P}.$$
For the particular value $j=h_{1}$ we find that 
$$I_{j}\cap \lbrace gn+h_{1},...,gn+h_{k}\rbrace=\lbrace gn+h_{1},...,gn+h_{k}\rbrace,$$
because $gn+h_{1}=N_{h_{1}}$ and 
$$gn+h_{k}<N_{h_{1}}+\lambda \log N_{h_{1}},$$
since 
$$h_{k}-h_{1}<\lambda\log x<\lambda \log N_{h_{1}}.$$
Finally, for the special value $j=\lfloor \lambda\log N_{0}\rfloor$ we have 
$$I_{j}\cap\lbrace gn+h_{1},...,gn+h_{k}\rbrace=\emptyset.$$
In fact, by \eqref{eq:5.1} it is sufficient to show that $gn+h_{k}<N_{j}$, which holds because $h_{k}<j$. Indeed, 
$$j=\lfloor \lambda\log N_{0}\rfloor> \lambda\log N_{0}-1>\lambda\log(x\log x)-1=\lambda\log x+\lambda\log\log x -1>\lambda\log x>h_{k},$$
if $x$ is sufficiently large in terms of $k$. Thus, we have proved that
$$|I_{h_{1}}\cap \mathbb{P}|\geq m, |I_{\lfloor \lambda\log N_{0}\rfloor}\cap\mathbb{P}|=0.$$
But if $|I_{j}\cap \mathbb{P}|<|I_{j+1}\cap\mathbb{P}|$, then $|I_{j+1}\cap\mathbb{P}|=|I_{j}\cap\mathbb{P}|+1$ and this means that there must exist some $j\in\lbrace 0,...,\lfloor \lambda\log N_{0}\rfloor\rbrace$ for which $|I_{j}\cap\mathbb{P}|=m$. Again by the particular choice of our intervals \eqref{eq:5.1}, it is clear that in this way we can construct an injective correspondence between intervals in $I(x)$ and intervals of the form $[N, N+\lambda\log N]$ containing exactly $m$ primes, with $N<4x\log x+\lambda\log (4x\log x)<5x\log x$, if $x$ is sufficiently large. By the lower bound \eqref{eq:3.13}, we find that for every $m\geq 0$ and for each $\lambda\leq\varepsilon$, with $\varepsilon$ a small multiple of $k^{-4}(\log k)^{-2}$,
\begin{equation}
\label{eq:5.2}
|\lbrace N\leq 5x\log x: |[N, N+\lambda\log N]\cap\mathbb{P}|=m\rbrace|\gg x\exp(-C_{7}k^{5}),
\end{equation}
when $x$ is sufficiently large, which is equivalent to
\begin{equation}
\label{eq:5.3}
|\lbrace N\leq X: |[N, N+\lambda\log N]\cap\mathbb{P}|=m\rbrace|\gg \frac{X}{\log X•}\exp(-C_{7}k^{5}),
\end{equation}
when $X$ is sufficiently large in terms of $k$, which proves Theorem \ref{thm:1.2}. 
\section{Concluding remarks} In this section we make explicit the dependence between $\lambda$ and $m$. We know that $\lambda\ll k^{-4}(\log k)^{-2}$ and that $k\gg \exp(48m)$. Therefore, we certainly have $\lambda (48m)^{2}\exp(192m)\ll 1.$ Moreover, the explicit constant $\exp(-C_{7}k^{5})$ in \eqref{eq:5.3} can be written in terms of $m$ as $\exp(-C_{8}\exp(240m))$, for a suitable absolute constant $C_{8}>0$. We remark that in \cite[Section 5]{F} Freiberg pointed out the possibility of an improvement of Theorem \ref{thm:1.1}, for certain values of $\lambda$ and $m$ satisfying an interdependence similar to the previous one, via a proof that uses Maynard's sieve alone and does not involve the Erd\H{o}s--Rankin construction. The present paper goes in this direction. Finally, we add a note about Lemma \ref{lem:3.1}. It is a generalization of \cite[Lemma 8.1 (ii)]{M1} to admissible sets of linear functions $\mathcal{L}=\lbrace gn+h_{1},...,gn+h_{k}\rbrace$, when $g\leq 2\log x$. Actually, we can extend Lemma \ref{lem:3.1} to all the sets $\mathcal{L}$ in which $g\ll (\log x)^{O(1)}$, because its proof works also in this case, or in which $g/\varphi(g)\ll 1$, inspecting the proof of \cite[Lemma 8.1 (ii)]{M1}. In this more general setup, it is used implicitly in \cite[Theorem 3.3]{M1} and we made it explicit here.
\section{Acknowledgements}
I would like to thank Dimitris Koukoulopoulos for suggesting this problem and James Maynard for useful comments and encouragements.

\end{document}